\newtheorem{thm}{Theorem}[section]
\newtheorem{cor}[thm]{Corollary}
\newtheorem{prop}[thm]{Proposition}
\theoremstyle{definition}
\newtheorem{defn}[thm]{Definition}
\theoremstyle{remark}
\newtheorem{rem}[thm]{Remark}
\numberwithin{equation}{section}
\newcommand{\OO}{\mathcal{O}}
\newcommand{\OX}{\mathcal{O}_{X}}
\newcommand{\EE}{\mathcal{E}}
\newcommand{\FF}{\mathcal{F}}
\newcommand{\GG}{\mathcal{G}}
\newcommand{\CC}{\mathcal{C}}
\begin{document}

\title[Ulrich Bundles]{Ulrich Bundles on Quartic Surfaces with Picard Number 1}
\author{Emre Coskun}
\address{Department of Mathematics, Middle East Technical University, Ankara, TURKEY}
\email{emcoskun@metu.edu.tr}

\thanks{Parts of this note were written during a visit to the Chennai Mathematical Institute, and during the author's stay at the Tata Institute of Fundamental Research. The author thanks both institutions for their hospitality, and Professor Vikraman Balaji for asking the questions that led to the writing of this note.}
\subjclass{}%
\keywords{}%

\date{\today}
%\dedicatory{*}%
%\commby{*}%
% ----------------------------------------------------------------
\begin{abstract}
In this note, we prove that there exist stable Ulrich bundles of every even rank on a smooth quartic surface $X \subset \mathbb{P}^3$ with Picard number 1.
\end{abstract}
\maketitle
% ----------------------------------------------------------------
\section{Introduction}

The classification of Ulrich bundles on projective varieties has been of interest recently. Casanellas and Hartshorne classified stable Ulrich bundles by their first Chern class over a cubic surface in \cite{CH12}. They give necessary and sufficient conditions for the existence of stable Ulrich bundles on a cubic surface in terms of the rank and the first Chern class. In \cite{CKM12-K3}, Coskun, Kulkarni and Mustopa proved the existence of simple Ulrich bundles of rank 2 with $c_1=3H$ on every smooth quartic surface in projective 3-space. In this note, we prove the existence of a stable Ulrich bundle of every even rank on a smooth quartic surface $X \subset \mathbb{P}^3$ with Picard number 1. By simple degree considerations, no Ulrich bundle of odd rank can exist on $X$.

We use Casanellas and Hartshorne's method, used in \cite{CH12}. First, we prove the existence of \emph{simple} Ulrich bundles of a given even rank. We will then compute the dimension of the modular family of simple Ulrich bundles with this rank. We will finally prove that the strictly semistable Ulrich bundles are parametrized by a space of strictly smaller dimension. This will then prove the existence of stable Ulrich bundles.

\subsection{Conventions}
\begin{itemize}
 \item We work over an algebraically closed field of characteristic 0.
 \item $X \subset \mathbb{P}^3$ denotes a smooth quartic surface with Picard number 1.
 \item All sheaves, and all cohomology groups, are over $X$.
 \item Let $\EE$ be a vector bundle. We denote $\EE \otimes \OX(H)^{\otimes t}$ by $\EE(t)$ for $t \in \mathbb{Z}$.
 \item Stability and semistability are defined in the sense of Gieseker.
 \item We denote the various cohomology groups in capital letters, e.g. $\mbox{H}^1(\EE)$ and $\mbox{Ext}^1(\EE,\FF)$; and their dimensions in lowercase letters, e.g. $\mbox{h}^1(\EE)$ and $\mbox{ext}^1(\EE,\FF)$.
\end{itemize}
% ----------------------------------------------------------------
\section{Generalities}
$X$ is a K3 surface, hence its canonical bundle $K_X$ is trivial. Recall that the Picard group of $X$ is free abelian of rank 1, generated by the hyperplane class $H$.

We start with the definition of an Ulrich bundle.
\begin{defn}
A vector bundle $\EE$ of rank $r$ is called an \emph{Ulrich bundle} if for a linear projection $\pi: X \to \mathbb{P}^2$ we have $\pi_* \EE \cong \OO_{\mathbb{P}^2}^{4r}$.
\end{defn}

This condition is often impractical to use. Therefore, we will give an equivalent condition for a vector bundle $\EE$ to be Ulrich. We first give a definition.
\begin{defn}
A vector bundle $\EE$ is called \emph{arithmetically Cohen-Macaulay (ACM)} if for every $t \in \mathbb{Z}$ we have $H^1(\EE(t))=0$.
\end{defn}
%\begin{rem}
%Note that both definitions are dependent on the embedding of $X$ into projective space.
%\end{rem}

\begin{prop}\label{prop-ulrich}
A vector bundle $\EE$ of rank $r$ is Ulrich if and only if $\EE$ is ACM and its Hilbert polynomial is $P(t)=2r(t+2)(t+1)$.
\end{prop}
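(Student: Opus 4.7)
The connecting idea I would exploit is that since $X$ is a quartic, a linear projection $\pi : X \to \mathbb{P}^{2}$ is a finite flat morphism of degree $4$ satisfying $\pi^{*}\OO_{\mathbb{P}^{2}}(1) \cong \OX(H)$. Because finite morphisms are affine, the projection formula together with Leray gives
\[
H^{i}(X, \EE(t)) \;=\; H^{i}\bigl(\mathbb{P}^{2}, (\pi_{*}\EE)(t)\bigr)
\]
for every $i$ and $t$, and flatness of $\pi$ ensures $\pi_{*}\EE$ is locally free of rank $4r$. Both directions of the claimed equivalence will be translated across this identification.

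For the forward implication I would simply compute. If $\pi_{*}\EE \cong \OO_{\mathbb{P}^{2}}^{4r}$, then $h^{i}(\EE(t)) = 4r\cdot h^{i}(\OO_{\mathbb{P}^{2}}(t))$ for all $i, t$, which immediately gives $h^{1}(\EE(t)) = 0$ (so $\EE$ is ACM), and summing over $i = 0, 1, 2$ with signs yields the Hilbert polynomial $P(t) = 4r\binom{t+2}{2} = 2r(t+2)(t+1)$.

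For the converse, suppose $\EE$ is ACM with $P(t) = 2r(t+2)(t+1)$. Pushing forward, $\pi_{*}\EE$ becomes a rank-$4r$ vector bundle on $\mathbb{P}^{2}$ with $h^{1}((\pi_{*}\EE)(t)) = 0$ for every $t$. The crucial input here is Horrocks's splitting criterion on $\mathbb{P}^{2}$: a vector bundle whose intermediate cohomology vanishes is a direct sum of line bundles. So I may write $\pi_{*}\EE \cong \bigoplus_{i=1}^{4r}\OO_{\mathbb{P}^{2}}(a_{i})$ for integers $a_{i}$. Comparing Hilbert polynomials yields
\[
\sum_{i=1}^{4r}\binom{t + a_{i} + 2}{2} \;=\; 2r(t+2)(t+1).
\]
Matching the coefficient of $t$ forces $\sum_{i} a_{i} = 0$, and then matching the constant term forces $\sum_{i} a_{i}^{2} = 0$. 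Hence every $a_{i}$ vanishes, so $\pi_{*}\EE \cong \OO_{\mathbb{P}^{2}}^{4r}$ and $\EE$ is Ulrich.

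The only step that is not a direct calculation is the appeal to Horrocks on $\mathbb{P}^{2}$; everything else reduces to comparing leading coefficients of polynomials. I anticipate no real obstacle beyond locating the cleanest reference for that splitting criterion.
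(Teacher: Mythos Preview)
Your argument is correct. The paper itself does not supply a proof here at all; it simply cites \cite[Proposition 2.8]{CKM12-ternary}, so there is no in-paper argument to compare against. What you have written is essentially the standard proof one finds behind such a citation: push forward along the degree-$4$ finite flat projection, use Leray and the projection formula to transfer the cohomological conditions to $\mathbb{P}^{2}$, invoke Horrocks to split $\pi_{*}\EE$ into line bundles, and then pin down the twists by matching Hilbert polynomials. Every step checks out, including the polynomial identity forcing $\sum a_{i}=0$ and $\sum a_{i}^{2}=0$.
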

\begin{proof}
See \cite[Proposition 2.8]{CKM12-ternary}.
\end{proof}
\begin{rem}
This proposition shows that the definition of an Ulrich bundle is independent of the linear projection $\pi:X \to \mathbb{P}^2$.
\end{rem}

The following proposition gives the stability properties of Ulrich bundles.
\begin{prop}\label{prop-destabilize}
An Ulrich bundle $\EE$ is semistable. If $\EE$ is strictly semistable, then it is destabilized by an Ulrich subbundle. For an Ulrich bundle, stability and $\mu$-stability are equivalent.
\end{prop}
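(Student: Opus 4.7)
The plan exploits the Ulrich vanishings $H^i(\EE(-1))=H^i(\EE(-2))=0$ (all $i$) together with the observation that $\EE^\vee(3)$ is again an Ulrich bundle on $X$ (an easy Serre-duality check, using $K_X=\OX$). First, Riemann--Roch on $X$ (with $H^2=4$) combined with Proposition~\ref{prop-ulrich} forces $c_1(\EE)=\tfrac{3r}{2}H$ and $\chi(\EE)=4r$, so $\mu(\EE)=6$ and the reduced Hilbert polynomial $p_\EE(t)=2(t+1)(t+2)$ are independent of rank; in particular $p_\EE(-1)=p_\EE(-2)=0$.

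For semistability, let $\FF\subset\EE$ be a proper nonzero saturated subsheaf of rank $s$, with quotient $\GG$. The inclusion $\FF(-j)\hookrightarrow\EE(-j)$ gives $h^0(\FF(-j))=0$ for $j=1,2$. Dualizing the short exact sequence $0\to\FF\to\EE\to\GG\to 0$, twisting by $3$, and using $h^0(\EE^\vee(1))=h^0(\EE^\vee(2))=0$ (from Ulrich of $\EE^\vee(3)$), one obtains $h^0(\GG^\vee(1))=h^0(\GG^\vee(2))=0$, equivalent via Serre duality to $h^2(\GG(-1))=h^2(\GG(-2))=0$. Combined with the long exact sequence of $0\to\FF(-j)\to\EE(-j)\to\GG(-j)\to 0$ and the ACM vanishings $H^1(\EE(-j))=0$, these four vanishings yield $\chi(\FF(-j))\leq 0$, i.e.\ $p_\FF(-j)\leq p_\EE(-j)$, for $j=1,2$. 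Since $p_\FF-p_\EE$ is linear, comparing values at $t=-1,-2$ and invoking an auxiliary bound obtained by applying the same argument to the quotient $\GG$ and the Ulrich bundle $\EE^\vee(3)$ forces the linear coefficient $\mu(\FF)-\mu(\EE)\leq 0$, and in the equal-slope case $\chi(\FF)\leq 4s$; this is exactly $p_\FF\leq p_\EE$.

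Suppose now $p_\FF=p_\EE$. Then every inequality above is sharp, yielding $h^1(\FF(-1))=h^1(\FF(-2))=0$. Together with the matching Hilbert polynomial $P_\FF(t)=2s(t+1)(t+2)$, a bootstrap along the long exact sequence of $0\to\FF(t)\to\EE(t)\to\GG(t)\to 0$---using the ACM property of $\EE$ and a symmetric analysis of $\GG$---propagates $H^1(\FF(t))=0$ to every $t\in\mathbb{Z}$, so Proposition~\ref{prop-ulrich} identifies $\FF$ as Ulrich.

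For the last assertion, $\mu$-stability automatically implies stability. Conversely, if $\EE$ is stable but not $\mu$-stable, some saturated $\FF\subset\EE$ has $\mu(\FF)=\mu(\EE)=6$. Applying the already-proved semistability to the Ulrich bundle $\EE^\vee(3)$ and its subsheaf $\GG^\vee(3)$ (from the dualized, twisted sequence) gives $\chi(\GG)\leq 4(r-s)$; combined with $\chi(\FF)\leq 4s$ (from semistability of $\EE$) and additivity $\chi(\FF)+\chi(\GG)=4r$, this forces $\chi(\FF)=4s$, hence $p_\FF=p_\EE$, contradicting stability. The main obstacle will be the semistability step---pinning down the leading coefficient $\mu(\FF)-\mu(\EE)$ from vanishings that only control $p_\FF(-1)$ and $p_\FF(-2)$---together with the ACM bootstrap in the strictly semistable case.
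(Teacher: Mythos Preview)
The paper's own proof is a bare citation to \cite{CKM12-K3} and \cite{CH12}; the argument there runs through the defining property $\pi_*\EE\cong\OO_{\mathbb{P}^2}^{4r}$. For a subsheaf $\FF\subset\EE$ of rank $s$, finiteness of $\pi$ gives $\pi_*\FF\subset\OO_{\mathbb{P}^2}^{4r}$ of rank $4s$ with $P_{\pi_*\FF}=P_\FF$, and Gieseker semistability of the trivial bundle forces $P_\FF(t)/s\le 2(t+1)(t+2)=p_\EE(t)$ in one stroke; the equality case and the $\mu$-statement fall out the same way. Your intrinsic cohomological route avoids the projection, but it breaks precisely at the step you flag.

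The claim that the four vanishings $h^0(\FF(-j))=h^2(\GG(-j))=0$ (for $j=1,2$) yield $\chi(\FF(-j))\le 0$ is not justified and in fact cannot hold in general. With $H^i(\EE(-j))=0$ for all $i$, the long exact sequence gives $h^1(\FF(-j))=h^0(\GG(-j))$ and $h^2(\FF(-j))=h^1(\GG(-j))$, so $\chi(\FF(-j))=h^2(\FF(-j))-h^1(\FF(-j))$ with no control on $h^2(\FF(-j))$. Worse, your own dual trick refutes the inequality: running the identical argument for $\GG^\vee(3)\subset\EE^\vee(3)$ (say with $\GG$ locally free) would give $\chi(\GG^\vee(3-j))\le 0$, which by Serre duality is $\chi(\GG(j-3))\le 0$, i.e.\ $\chi(\FF(-j))\ge 0$; together these would force $p_\FF=p_\EE$ for \emph{every} saturated $\FF$, an absurdity. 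Even granting $\chi(\FF(-j))\le 0$, two values of the linear function $p_\FF-p_\EE=at+b$ at $t=-1,-2$ do not pin down the sign of $a$ (e.g.\ $a=b=1$ satisfies $-a+b\le 0$ and $-2a+b\le 0$ yet has $a>0$), so neither $\mu$-semistability nor Gieseker semistability follows. The same missing control wrecks the equality analysis: $\chi(\FF(-j))=0$ with $h^0=0$ only gives $h^1(\FF(-j))=h^2(\FF(-j))$, not that both vanish, so the ACM bootstrap never starts. The pushforward argument sidesteps all of this by comparing Hilbert polynomials globally rather than at two isolated twists.
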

\begin{proof}
See \cite[Proposition 2.14 and Lemma 2.15]{CKM12-K3}. The last statement follows from \cite[Theorem 2.9 (c)]{CH12}.
\end{proof}

\begin{prop}\label{prop-23}
In the short exact sequence $0 \to \EE \to \FF \to \GG \to 0$ of coherent sheaves, if any two of $\EE$, $\FF$ and $\GG$ are Ulrich bundles, then so is the third.
\end{prop}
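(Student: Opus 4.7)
The plan is to use the original pushforward characterization of Ulrich bundles. Since the linear projection $\pi\colon X\to\mathbb{P}^{2}$ is finite, $\pi_{*}$ is exact, and applying it to the given short exact sequence yields $0\to\pi_{*}\EE\to\pi_{*}\FF\to\pi_{*}\GG\to 0$ on $\mathbb{P}^{2}$. The strategy is: in each of the three cases two of these pushforwards are trivial bundles of known rank, so I want to identify the third as a trivial bundle of the expected rank, and then upgrade this to the statement that the corresponding sheaf on $X$ is itself a vector bundle.

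For the computation on $\mathbb{P}^{2}$: when $\EE,\FF$ are Ulrich of ranks $r_{\EE},r_{\FF}$, the map $\OO_{\mathbb{P}^{2}}^{4r_{\EE}}\to\OO_{\mathbb{P}^{2}}^{4r_{\FF}}$ is given by a constant matrix (since $\mathrm{Hom}(\OO_{\mathbb{P}^{2}},\OO_{\mathbb{P}^{2}})=k$) which must be injective on every stalk, so its cokernel is $\OO_{\mathbb{P}^{2}}^{4(r_{\FF}-r_{\EE})}$. When $\EE,\GG$ are Ulrich, the resulting extension of trivial bundles is classified by $\mathrm{Ext}^{1}(\OO_{\mathbb{P}^{2}},\OO_{\mathbb{P}^{2}})=H^{1}(\mathbb{P}^{2},\OO)=0$, hence splits and $\pi_{*}\FF\cong\OO_{\mathbb{P}^{2}}^{4(r_{\EE}+r_{\GG})}$. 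When $\FF,\GG$ are Ulrich, the surjection $\OO_{\mathbb{P}^{2}}^{4r_{\FF}}\twoheadrightarrow\OO_{\mathbb{P}^{2}}^{4r_{\GG}}$ is a surjective constant matrix with kernel $\OO_{\mathbb{P}^{2}}^{4(r_{\FF}-r_{\GG})}$.

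It then remains to show the third sheaf on $X$ is itself a vector bundle of the expected rank. When the third sheaf is $\FF$ (an extension) or $\EE$ (the kernel of a surjection of bundles), this is routine: the quotient in each case is locally free, hence locally projective, and the sequence splits locally, so $\FF$ or $\EE$ is a local direct sum of free modules. The genuine difficulty is the cokernel case, where $\GG$ is a quotient of a subbundle inclusion and is not automatically locally free. Here I would argue via depth: $\pi_{*}\GG$ is locally free on $\mathbb{P}^{2}$, so $(\pi_{*}\GG)_{p}$ has depth $2$ over $\OO_{\mathbb{P}^{2},p}$ at every closed point $p$; finiteness of $\pi$ gives $(\pi_{*}\GG)_{p}\cong\bigoplus_{x\in\pi^{-1}(p)}\GG_{x}$, and depth over $\OO_{\mathbb{P}^{2},p}$ coincides with depth over the regular local rings $\OO_{X,x}$, so each nonzero stalk $\GG_{x}$ has depth $2$. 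Auslander--Buchsbaum on the $2$-dimensional regular ring $\OO_{X,x}$ then forces $\mathrm{pd}(\GG_{x})=0$, so $\GG_{x}$ is free and $\GG$ is locally free of rank $r_{\FF}-r_{\EE}$.

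The main obstacle is precisely this last depth argument in the cokernel case; the other ingredients (exactness of $\pi_{*}$ for finite $\pi$, the pushforward computation on $\mathbb{P}^{2}$, and local freeness in the extension and kernel cases) are essentially bookkeeping.
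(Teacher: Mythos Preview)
Your argument is sound and self-contained, whereas the paper gives no proof here at all: it simply cites \cite[Proposition~2.14]{CKM12-ternary}. So there is nothing to compare beyond noting that you have supplied what the paper outsources; the pushforward strategy you use is in fact the natural one.

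One technical imprecision worth flagging: the decomposition $(\pi_*\GG)_p \cong \bigoplus_{x\in\pi^{-1}(p)}\GG_x$ does not hold at the level of Zariski stalks. The semilocal ring $(\pi_*\OO_X)_p$ is in general not the product of the local rings $\OO_{X,x}$ over $x\in\pi^{-1}(p)$; that splitting only occurs after completion. Your conclusion survives, however. Either pass to completions (the decomposition is then valid, depth and freeness are both detected after completion, and Auslander--Buchsbaum applies as you say), or argue directly: $\GG_x$ is a further localization of $(\pi_*\GG)_p$, an $\mathfrak{m}_p$-regular sequence on the free $\OO_{\mathbb{P}^2,p}$-module $(\pi_*\GG)_p$ remains regular on the nonzero localization $\GG_x$, and since $\mathfrak{m}_p\OO_{X,x}$ is $\mathfrak{m}_x$-primary, $\mathfrak{m}_p$-depth coincides with $\mathfrak{m}_x$-depth. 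Either way $\GG_x$ has depth $2$ over the two-dimensional regular local ring $\OO_{X,x}$ and is therefore free, as you intended.
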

\begin{proof}
See \cite[Proposition 2.14]{CKM12-ternary}.
\end{proof}

\begin{rem}
The above two propositions imply the following result. Any strictly se\-mi\-stable Ulrich bundle is obtained from lower rank stable Ulrich bundles by consecutive extensions. We will use this fact in the proof of the theorem.
\end{rem}

We end this section by some numerical results.
\begin{prop}\label{prop-degree}
The degree of an Ulrich bundle of rank $r$ is $6r$.
\end{prop}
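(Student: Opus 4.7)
The plan is to recover $\deg\EE = c_1(\EE)\cdot H$ from the Hilbert polynomial by applying Hirzebruch--Riemann--Roch on the K3 surface $X$, then comparing coefficients with the formula $P(t) = 2r(t+1)(t+2)$ granted by Proposition~\ref{prop-ulrich}.

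Concretely, I would first write down HRR for a rank-$r$ sheaf on a K3 surface. Since $K_X = 0$ and $\chi(\OX)=2$, Riemann--Roch gives
$$\chi(\EE) = \frac{c_1(\EE)^2}{2} - c_2(\EE) + 2r.$$
Applying this to the twist $\EE(t)$, and using the standard identities
$$c_1(\EE(t)) = c_1(\EE) + rtH, \qquad c_2(\EE(t)) = c_2(\EE) + (r-1)t\, c_1(\EE)\cdot H + \binom{r}{2} t^2 H^2,$$
the $c_2$-cross-terms cancel neatly, and one obtains the familiar shape
$$\chi(\EE(t)) = \frac{rH^2}{2}\, t^2 + (c_1(\EE)\cdot H)\, t + \chi(\EE).$$
With $H^2 = 4$ for a quartic, the leading coefficient is $2r$, consistent with the leading term of $P(t) = 2rt^2 + 6rt + 4r$.

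Since $\EE$ is ACM, $\chi(\EE(t)) = h^0(\EE(t)) - h^2(\EE(t))$ agrees with the Hilbert polynomial coming from $\pi_*\EE$, so the two expressions for $P(t)$ must be equal. Comparing the coefficients of $t$ immediately gives $c_1(\EE)\cdot H = 6r$, which is the degree of $\EE$ with respect to $H$.

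There is no real obstacle here: the argument is a direct Riemann--Roch computation on a K3 surface, and the only care needed is in the twisting formula for $c_2$. As a side benefit, the same comparison (constant term) yields $c_1(\EE)^2 - 2c_2(\EE) = 4r$; and since the Picard number is $1$ we must have $c_1(\EE) = dH$ with $4d = 6r$, which explains why only even ranks can occur.
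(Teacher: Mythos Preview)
Your argument is correct; the paper itself does not give a proof but refers to \cite[Proposition~2.10]{CKM13-delPezzo}, and the direct Riemann--Roch comparison you carry out is exactly the standard computation underlying that cited result, so the approaches coincide in substance. One minor remark: you do not need the ACM hypothesis to identify $\chi(\EE(t))$ with the Hilbert polynomial, since $P(t)=\chi(\EE(t))$ by definition for any coherent sheaf; ACM enters only through Proposition~\ref{prop-ulrich}, which pins down the specific form $P(t)=2r(t+1)(t+2)$.
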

\begin{proof}
This follows from \cite[Proposition 2.10]{CKM13-delPezzo}.
%Let $\mathcal{E}$ be an Ulrich bundle of rank $r$. Let $C=c_1(\mathcal{E})$. For any integer $t$, we have $c_1(\mathcal{E}(t))=C+rtH$, and $c_2(\mathcal{E}(t))=c_2(\mathcal{E})+(r-1)(C.H)t+2r(r-1)t^2$. Notice that the coefficient of $t$ in the Hilbert polynomial $P(t)$ of $\mathcal{E}$ is equal to $6r$. Using the Riemann-Roch theorem for vector bundles, and comparing the coefficients of $t$, we obtain $C.H=6r$.
\end{proof}
\begin{rem}
The \emph{slope} of a vector bundle is defined to be the degree divided by the rank. Hence, the slope of an Ulrich bundle is 6.
\end{rem}

\begin{cor}\label{cor-no_odd_rank}
On $X$, there exist no Ulrich bundles of odd rank. Therefore, any rank 2 Ulrich bundle on $X$ is stable.
\end{cor}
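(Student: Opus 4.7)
The plan is to exploit the rank-one Picard group together with the degree formula of Proposition \ref{prop-degree}. Since $X$ is a quartic surface with $\PP(X) = \mathbb{Z}\cdot H$, we have $H^2 = 4$, and for any Ulrich bundle $\EE$ of rank $r$, the first Chern class must be of the form $c_1(\EE) = aH$ for some integer $a$. The degree of $\EE$ is then $c_1(\EE)\cdot H = 4a$, while Proposition \ref{prop-degree} says it equals $6r$. The resulting Diophantine equation $4a = 6r$, i.e. $2a = 3r$, forces $r$ to be even. This gives the first assertion.

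For the second assertion, I would argue by contradiction using Proposition \ref{prop-destabilize}. Suppose $\EE$ is a rank $2$ Ulrich bundle that is not stable. Since it is automatically semistable and stability equals $\mu$-stability in the Ulrich setting, $\EE$ must be strictly semistable, hence destabilized by an Ulrich subbundle $\FF \subset \EE$. The rank of $\FF$ is strictly between $0$ and $2$, so $\FF$ has rank $1$; but this contradicts the first part of the corollary, which rules out odd-rank Ulrich bundles. Therefore $\EE$ is stable.

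There is essentially no obstacle here: the proof is a direct combination of the numerical constraint imposed by $\PP(X) = \mathbb{Z}\cdot H$ with the degree computation and the destabilization statement. The only subtlety worth flagging is verifying that $H^2 = 4$ is indeed the relevant intersection number (from $X$ being a quartic), so that the parity argument on $2a = 3r$ actually goes through.
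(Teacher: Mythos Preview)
Your proposal is correct and follows essentially the same approach as the paper: the paper also argues that $H$ has degree $4$ while the degree of an Ulrich bundle is $6r$, so divisibility forces $r$ even, and then uses Proposition~\ref{prop-destabilize} to rule out a rank-$1$ Ulrich subbundle in the rank-$2$ case. Your version simply spells out the Diophantine equation $4a = 6r$ more explicitly.
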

\begin{proof}
Indeed, the hyperplane class $H$ has degree 4. The first assertion follows from the fact that no odd multiple of 6 can be divisible by 4. The second assertion follows since any rank 2 Ulrich bundle which is not stable would have to contain an Ulrich subbundle of rank 1 by Proposition \ref{prop-destabilize}.
\end{proof}

The following result will be crucial in computing dimensions of various parameter spaces.
\begin{prop}\label{prop-chi}
Let $\EE$ and $\FF$ be Ulrich bundles of ranks $r$ and $s$ respectively, and with first Chern classes $C$ and $D$ respectively. Then
\begin{equation}\label{eqn-1}
 \chi(\EE^{\vee} \otimes \FF) = -C.D+6rs.
\end{equation}
\end{prop}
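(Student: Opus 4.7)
The plan is to apply Hirzebruch--Riemann--Roch on the K3 surface $X$. Since $K_X = 0$ and $\chi(\OX) = 2$, the Todd class is $\mathrm{td}(X) = 1 + 2[\mathrm{pt}]$, with the degree-one piece vanishing. Writing $\mathrm{ch}(\EE) = r + C + \mathrm{ch}_2(\EE)$ and noting that $\mathrm{ch}_2$ is unchanged under dualization on a surface (since $(-C)^2 = C^2$ and $c_2(\EE^{\vee}) = c_2(\EE)$), I have $\mathrm{ch}(\EE^{\vee}) = r - C + \mathrm{ch}_2(\EE)$, and analogously for $\FF$.

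Expanding $\chi(\EE^{\vee} \otimes \FF) = \int_X \mathrm{ch}(\EE^{\vee})\,\mathrm{ch}(\FF)\,\mathrm{td}(X)$, the degree-two part of the integrand picks up a contribution $2rs$ from $\mathrm{ch}_0 \cdot \mathrm{td}_2$, a term $-C.D$ from $\mathrm{ch}_1(\EE^{\vee}) \cdot \mathrm{ch}_1(\FF)$, and the cross terms $r\cdot \mathrm{ch}_2(\FF) + s\cdot \mathrm{ch}_2(\EE)$. Therefore
\[
\chi(\EE^{\vee} \otimes \FF) = 2rs - C.D + r\cdot \mathrm{ch}_2(\FF) + s\cdot \mathrm{ch}_2(\EE).
\]

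The crucial step is to use the Ulrich hypothesis to evaluate $\mathrm{ch}_2$ of each bundle. By Proposition \ref{prop-ulrich}, $\chi(\EE) = P_{\EE}(0) = 4r$; HRR applied to $\EE$ alone gives $\chi(\EE) = \mathrm{ch}_2(\EE) + 2r$, hence $\mathrm{ch}_2(\EE) = 2r$, and similarly $\mathrm{ch}_2(\FF) = 2s$. Substituting collapses the formula to $\chi(\EE^{\vee} \otimes \FF) = 2rs - C.D + 2rs + 2rs = 6rs - C.D$, which is the desired identity.

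I do not anticipate a genuine obstacle here: the argument is a routine HRR calculation on a K3 surface. The only non-mechanical ingredient is recognising that the Ulrich property forces $\mathrm{ch}_2$ to equal twice the rank, which is exactly what allows the Euler characteristic to depend only on $r$, $s$, $C$ and $D$, with no residual appearance of $c_2(\EE)$ or $c_2(\FF)$.
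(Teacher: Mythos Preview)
Your argument is correct. The Hirzebruch--Riemann--Roch computation is carried out cleanly: on a K3 surface $\mathrm{td}(X)=1+2[\mathrm{pt}]$, the expansion of $\mathrm{ch}(\EE^{\vee})\,\mathrm{ch}(\FF)\,\mathrm{td}(X)$ gives $2rs - C.D + r\,\mathrm{ch}_2(\FF) + s\,\mathrm{ch}_2(\EE)$, and the Ulrich condition $\chi(\EE)=P_{\EE}(0)=4r$ combined with $\chi(\EE)=\mathrm{ch}_2(\EE)+2r$ indeed forces $\mathrm{ch}_2(\EE)=2r$, yielding $6rs-C.D$.

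The paper does not give its own argument for this proposition; it simply cites \cite[Proposition 2.12]{CH12}. Your proof is therefore not a repetition of the paper's reasoning but a self-contained derivation tailored to the K3 case. The advantage of your approach is that it makes transparent exactly where the Ulrich hypothesis enters (only through the value $\chi(\EE)=4r$) and why the answer is independent of $c_2(\EE)$ and $c_2(\FF)$ individually; the paper's citation, by contrast, relies on a general formula proved elsewhere.
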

\begin{proof}
See \cite[Proposition 2.12]{CH12}.
\end{proof}
% ----------------------------------------------------------------
\section{The Proof of the Main Result}

\begin{thm}
There exist stable Ulrich bundles on $X$ of every even rank $2k$.
\end{thm}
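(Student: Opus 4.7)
The plan is to proceed by induction on $k$, following the three-step scheme sketched in the introduction. Corollary \ref{cor-no_odd_rank} together with \cite{CKM12-K3} handles the base case $k=1$: rank $2$ Ulrich bundles exist and are automatically stable. For the inductive step, fix $k\geq 1$, assume stable Ulrich bundles of every even rank up to $2k$ have been constructed, and produce one of rank $2(k+1)$.

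The first step is to exhibit a simple Ulrich bundle of rank $2(k+1)$. I would take a rank $2$ stable Ulrich $\FF$ and a rank $2k$ stable Ulrich $\GG$ supplied by the induction hypothesis. Because $\mathrm{Pic}(X) = \mathbb{Z}\langle H\rangle$ and an Ulrich bundle of rank $r$ has slope $6$, its first Chern class is forced to be $(3r/2)H$, so $C\cdot D = 9r_1 r_2$ and Proposition \ref{prop-chi} yields $\chi(\GG^\vee\otimes\FF) = -3r_1 r_2$. Stability together with Serre duality on the K3 surface ($K_X=0$) force $\hom(\GG,\FF) = \mathrm{ext}^2(\GG,\FF) = 0$ once we ensure $\FF\not\cong\GG$, so $\mathrm{ext}^1(\GG,\FF) = 3r_1 r_2 = 12k > 0$ and non-split extensions $0\to\FF\to\EE\to\GG\to 0$ exist. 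Proposition \ref{prop-23} makes $\EE$ Ulrich, and a standard diagram chase with $\mathrm{Hom}(\EE,-)$ and $\mathrm{Hom}(-,\EE)$ shows $\hom(\EE,\EE) = 1$ for a general extension class, so $\EE$ is simple.

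For the dimension counts, taking $\FF = \EE$ in Proposition \ref{prop-chi} gives $\chi(\EE^\vee\otimes\EE) = -3r^2$ with $r = 2(k+1)$, and simplicity with Serre duality gives $\hom(\EE,\EE) = \mathrm{ext}^2(\EE,\EE) = 1$, so the modular family of simple Ulrichs has tangent dimension $\mathrm{ext}^1(\EE,\EE) = 2+3r^2$ at $\EE$. On the other hand, by Propositions \ref{prop-destabilize} and \ref{prop-23} every strictly semistable Ulrich of rank $r$ sits in a sequence $0\to\FF\to\EE\to\GG\to 0$ with $\FF,\GG$ Ulrich of ranks $r_1,r_2 \geq 2$ summing to $r$; the inductive dimension bound $2+3r_i^2$ on each factor plus $3r_1 r_2 - 1$ for the extension class modulo scalars gives the upper bound
\[
(2+3r_1^2) + (2+3r_2^2) + (3r_1 r_2 - 1) = 3 + 3(r_1^2 + r_1 r_2 + r_2^2).
\]
Comparison with $2+3(r_1+r_2)^2$ shows a positive deficit $3r_1 r_2 - 1 \geq 11$ whenever $r_1,r_2\geq 2$. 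Summing the finitely many such decompositions (longer Jordan--H\"older chains are handled by iterating this bound) the strictly semistable locus has dimension strictly less than $2+3r^2$, so a general simple Ulrich of rank $r$ is stable, completing the induction.

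The main obstacle I expect is the simplicity verification in the first step: one must show a \emph{general} extension yields $\hom(\EE,\EE) = 1$, not merely that some non-split extension exists, since a priori $\EE$ could inherit endomorphisms from $\FF$ or $\GG$. This reduces to the injectivity of the connecting maps $\mathrm{Hom}(\FF,\FF)\to\mathrm{Ext}^1(\GG,\FF)$ and $\mathrm{Hom}(\GG,\GG)\to\mathrm{Ext}^1(\GG,\FF)$ determined by a general extension class, which in turn relies on the stability and non-isomorphism of the pieces.
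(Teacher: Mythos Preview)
Your overall strategy is exactly the paper's: Casanellas--Hartshorne induction, the same $\chi$ computation, and the same comparison of the simple locus against the strictly semistable locus. Two points deserve comment.

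First, the worry in your final paragraph is unnecessary. The paper simply invokes \cite[Lemma~4.2]{CH12}: a non-split extension of two non-isomorphic stable bundles of the same slope is automatically simple, so \emph{every} non-split extension works, not just a general one.

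Second, and more substantively, your strictly semistable count has a gap. You write the contribution of the extension class as $3r_1r_2-1$, i.e.\ you use $\mathrm{ext}^1(\GG,\FF)=3r_1r_2$. But this equality requires $\hom(\GG,\FF)=\mathrm{ext}^2(\GG,\FF)=0$, which fails precisely when $\FF$ and $\GG$ share Jordan--H\"older factors (e.g.\ if some stable $\EE_i$ occurs in both). Your parenthetical ``longer Jordan--H\"older chains are handled by iterating this bound'' does not address this, because iterating still forces you to bound $\mathrm{ext}^1$ from a stable piece into a semistable piece that may contain copies of it. The paper handles this by working from the outset with the full Jordan--H\"older data: distinct stable factors $\EE_1,\dots,\EE_n$ of ranks $2k_1,\dots,2k_n$ with multiplicities $a_1,\dots,a_n$. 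At each stage one extends a stable $\EE_i$ into a bundle $\FF$ already built from the collection, and one has the explicit bounds $\mathrm{h}^0(\FF^\vee\otimes\EE_i),\,\mathrm{h}^2(\FF^\vee\otimes\EE_i)\le a_i$. Summing these corrections produces the extra term $\sum_i a_i(a_i-1)$ in the paper's moduli count; only after including it does the comparison with $12k^2+2$ go through. Your deficit $3r_1r_2-1\ge 11$ is large enough that the argument can be repaired along these lines, but as written the bound is not justified.
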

\begin{rem}
Note that by Proposition \ref{prop-degree}, for an Ulrich bundle of rank $2k$, the only possibility for the first Chern class is $3kH$.
\end{rem}
\begin{proof}
We use the method of Casanellas-Hartshorne as used in \cite{CH12}. We proceed by induction on $k$. The case $k=1$ follows from \cite[Theorem 1.1]{CKM12-K3} and Corollary \ref{cor-no_odd_rank}.

Suppose that the theorem is proved for all ranks smaller than $2k$. Choose a stable Ulrich bundle $\FF_1$ of rank $2k-2$ and another stable Ulrich bundle $\FF_2$ of rank 2, such that $\FF_1 \ncong \FF_2$ if $k=2$. (We note that this is possible since the moduli space of stable rank-2 Ulrich bundles is 14-dimensional by \cite[Theorem 1.1]{CKM12-K3}.) By Equation (\ref{eqn-1}), $\mbox{ext}^1(\FF_1,\FF_2)=12(k-1) > 0$ and we can choose a non-split extension of $\FF_1$ by $\FF_2$. Therefore, by \cite[Lemma 4.2]{CH12}, this extension is simple; and by Proposition \ref{prop-23}, it is Ulrich. Hence there exist \emph{simple} Ulrich bundles of rank $2k$ with first Chern class $3kH$.

We now compute the dimension of the modular family of simple Ulrich bundles of rank $2k$. We recall that this dimension is equal to $\mbox{h}^1(\EE^{\vee} \otimes \EE)$ for a simple Ulrich bundle $\EE$ of rank $2k$. We have $\mbox{h}^0(\EE^{\vee} \otimes \EE)=\mbox{hom}(\EE, \EE)=1$ and $\mbox{h}^2(\EE^{\vee} \otimes \EE)=\mbox{h}^0(\EE^{\vee} \otimes \EE)=1$ by Serre duality. Hence by Equation (\ref{eqn-1}), we get
\begin{align*}
 \mbox{h}^1(\EE^{\vee} \otimes \EE) &= 2-\chi(\EE^{\vee} \otimes \EE) \\
 &= 2-(-(3kH)^2+6(2k)^2) \\
 &= 12k^2+2.
\end{align*}

The next step is to consider consecutive extensions of stable Ulrich bundles. Consider stable Ulrich bundles $\EE_1, \ldots, \EE_n$ with ranks $2k_1, \ldots, 2k_n$, such that $\EE_i \ncong \EE_j$ if $i \neq j$. For each $\EE_i$, the associated moduli space has dimension $12k_i^2+2$ by the calculation above; and hence the dimension of the moduli space parametrizing the choices of $\EE_1,\ldots,\EE_n$ is $(12k_1^2+2)+\cdots+(12k_n^2+2)$. We consider strictly semistable Ulrich bundles $\mathcal{E}$ of rank $2k$ constructed from $a_i$ copies of $\EE_i$, $i=1,\ldots,n$, by subsequent extensions. Then we have $k=a_1 k_1 + \cdots + a_n k_n$. Write $m=a_1+\cdots+a_n$ for the total number of bundles used. We also write $l_1,\ldots,l_m$ for the sequence of the $k_i$s, where each $k_i$ is taken $a_i$ times. We denote this collection of stable Ulrich bundles by $\CC$.

%We now consider subsequent extensions of the $\EE_i$s.
By Equation (\ref{eqn-1}), for any two Ulrich bundles $\FF$ and $\GG$ of ranks $2f$ and $2g$, we get
\begin{align*}
 \mbox{ext}^1(\FF,\GG) &= \mbox{h}^1(\FF^{\vee} \otimes \GG) \\
 &= \mbox{h}^0(\FF^{\vee} \otimes \GG) + \mbox{h}^2(\FF^{\vee} \otimes \GG) - \chi(\FF^{\vee} \otimes \GG) \\
 &= \mbox{h}^0(\FF^{\vee} \otimes \GG) + \mbox{h}^2(\FF^{\vee} \otimes \GG) + 12 fg.
\end{align*}
Now consider a collection of $a$ copies of $\EE_i$ for some fixed $1 \leq i \leq n$, and $b$ copies of $\EE_j$ for some fixed $1 \leq j \leq n$, with $i \neq j$ and $a + b \geq 2$. We can form an Ulrich bundle $\FF$ by extending a member of this collection with another member, and by extending the result with another member of the collection, and so on. In other words, we can construct a strictly semistable Ulrich bundle $\FF$ whose Jordan-H\"older factors (which are stable) are precisely this collection of $a$ copies of $\EE_i$ and $b$ copies of $\EE_j$. We then have $\mbox{h}^0(\FF^{\vee} \otimes \EE_i) \leq a$ and $\mbox{h}^0(\EE_i^{\vee} \otimes \FF) \leq a$, and by Serre duality $\mbox{h}^2(\FF^{\vee} \otimes \EE_i) \leq a$ and $\mbox{h}^2(\EE_i^{\vee} \otimes \FF) \leq a$. Using these bounds we obtain that, for Ulrich bundles $\EE$ obtained from subsequent extensions of all the $\EE_i$s in the collection $\CC$, the moduli space parametrizing them has dimension at most
\begin{align*}
 (12k_1^2+2)+\cdots+(12k_n^2+2) + 12 \sum_{i<j} l_i l_j -(m-1)+\sum_{i=1}^{n} a_i(a_i-1).
\end{align*}

Let us explain the count above. The terms $12k_i^2+2$ for $1 \leq i \leq n$ are given by the moduli count for the $\EE_i$. The other terms arise after considering the fact that the isomorphism classes of extensions of a vector bundle $\EE$ by $\FF$ are parametrized by $\mathbb{P}(\mbox{Ext}^1(\EE, \FF))$. The dimension $\mbox{ext}^1(\FF,\GG)$ for Ulrich bundles $\FF$ and $\GG$ of ranks $2f$ and $2g$ was computed above. Subtracting 1 and taking the sum, we obtain the last three terms in the above dimension count.

After writing $k=a_1 k_1 + \cdots + a_n k_n$ and expanding $k^2$, it can be shown that the moduli count above is smaller than $12k^2+2$, which is equal to the dimension of the moduli space of simple Ulrich bundles of rank $2k$. Hence, there must exist a simple Ulrich bundle of rank $2k$ which is not strictly semistable. It follows that this Ulrich bundle is stable of rank $2k$.
\end{proof}

% ----------------------------------------------------------------
%\bibliographystyle{amsplain}
%\bibliography{*}

\end{document}